\newcommand{\eps}{\varepsilon}
\newcommand{\R}{\mathbb{R}}
\newcommand{\RN}{{\mathbb{R}^N}}
\newcommand{\RT}{{\mathbb{R}^3}}
\renewcommand{\le}{\leqslant}
\renewcommand{\ge}{\geqslant}
\renewcommand{\a }{\alpha }
\renewcommand{\d }{\delta }
\renewcommand{\l }{\lambda}
\newcommand{\n }{\nabla }
\newcommand{\s }{\sigma }
\renewcommand{\t}{\theta}
\renewcommand{\O}{\Omega}
\renewcommand{\S}{\Sigma}
\newcommand{\OO}{\mathcal O}
   \newcommand{\TT}{\mathcal T}
\newcommand{\Jtq}{J^T_q}
\renewcommand{\H}{H^1(\RT)}
\newcommand{\Hr}{H^1_r(\RT)}
\newcommand{\Hco}{H^1_{cyl,o}(\RT)}
\newcommand{\E}{\mathcal{E}}
\newcommand{\M}{\mathcal{M}}
\renewcommand{\o}{\omega}
\newcommand{\mtq}{m_q^T}
\newcommand{\mutq}{\mu^{T,q}}
\newcommand{\D }{{\mathcal D}^{1,2}(\RT)}
\newcommand{\Dce }{{\mathcal D}^{1,2}_{cyl,e}(\RT)}
\newcommand{\irt }{\int_{\RT}}
\newtheorem{theorem}{Theorem}[section]
\newtheorem{lemma}[theorem]{Lemma}
\renewenvironment{proof}{\noindent{\textbf{Proof\quad}}}{$\hfill\square$\vspace{0.2 cm}\\}
\newenvironment{proofmain}{\noindent{\textbf{Proof of Theorem  \ref{main}\quad}}}{$\hfill\square$\vspace{0.2 cm}\\}
\newenvironment{proofsol}{\noindent{\textbf{Proof of Theorem  \ref{sol}\quad}}}{$\hfill\square$\vspace{0.2 cm}\\}
\title{{\bf Concentration and compactness\\
in nonlinear Schr\"odinger-Poisson system\\
with a general nonlinearity\footnote{The author is supported by
M.I.U.R. - P.R.I.N. ``Metodi variazionali e topologici nello
studio di fenomeni non lineari''}}}
\author{A. Azzollini \thanks{Dipartimento di Matematica ed Informatica, Universit\`a degli
Studi della Basilicata,  Via dell'Ateneo Lucano 10, I-85100
Potenza, Italy, e-mail: {\tt antonio.azzollini@unibas.it}} }
\date{}
\begin{document}
    \maketitle

\begin{abstract}
In this paper we use a concentration and compactness argument to
prove the existence of a nontrivial nonradial solution to the
nonlinear Schr\"odinger-Poisson equations in $\RT,$ assuming on
the nonlinearity the general hypotheses introduced by Berestycki
\& Lions.\\

\end{abstract}

\section*{Introduction}
We consider the following Schr\"odinger-Poisson system
\begin{equation}
\left\{
\begin{array}{ll}
-\Delta u+q\phi u=g(x,u)&\hbox{in }\O,
\\
-\Delta \phi=q u^2&\hbox{in }\O,
\end{array}
\right.
\end{equation}
where $\O$ is an unbounded domain in $\RT$ and
$g:\RT\times\R\to\R$. In \cite{ADP} the system has been studied
using a variational approach, for $\O=\RT$ and assuming on
$g=g(u)$ the Berestycki and Lions hypotheses (see \cite{BL1}). In
particular, it has been showed that the solutions can be found as
critical points of an associated functional defined in $\H$. A
first difficulty in applying the classical methods of critical
points theory is the lack of compactness, due to the unboundedness
of the domain. In \cite{ADP} this difficulty has been overcome by
restricting the functional to the natural constraint $\Hr$, the
set of the radially symmetric functions in $\H$, for which compact
embeddings hold.

However, it could happen that such a restriction is not allowed or
not suitable to our aim. For example, consider these three
situations:
    \begin{itemize}
        \item $\O$ is not radially symmetric with respect to a point,
        \item $g(\cdot, s)$ is not invariant under the action of
        the group of rotations (for example in presence of a
        breaking-symmetry potential),
        \item we are looking for non-radial solutions of the
        problem.
    \end{itemize}
Each of these situations does not allow us to use the set of the
radially symmetric functions as a nice functional setting, and we
have to handle the problem of the lack of compactness using a
different approach.

The aim of this paper is to show how the concentration and
compactness principle can be used as an alternative technique to
get compactness. In particular, in the same spirit of \cite{DA},
we are interested in looking for non-radial solutions to the
problem
    \begin{equation}    \label{SP}\tag{${\cal SP}$}
\left\{
\begin{array}{ll}
-\Delta u+q\phi u=g(u)&\hbox{in }\RT,
\\
-\Delta \phi=q u^2&\hbox{in }\RT.
\end{array}
\right.
\end{equation}

In \cite{DA} an existence result has been proved assuming that
$g(u)=|u|^{p-2}u$ and $4<p<6$. Here we consider a more general
nonlinear term, namely a Berestycki \& Lions type nonlinearity. So
we assume that
\begin{itemize}
\item[({\bf g1})] $g\in C(\R,\R)$, $g$ odd; \item[({\bf g2})]
$-\infty <\liminf_{s\to 0^+} g(s)/s\le \limsup_{s\to 0^+}
g(s)/s=-\o<0$; \item[({\bf g3})] $-\infty \le\limsup_{s\to
+\infty} g(s)/s^p\le 0$, $1<p<5$; \item[({\bf g4})] there exists
$\zeta>0$ such that $G(\zeta):=\int_0^\zeta g(s)\,d s>0$.
\end{itemize}

The literature on the Schr\"odinger-Poisson system in presence of
a pure power nonlinearity is very reach: we mention \cite{A,ADP}
and the references therein. In \cite{C1,CG,WZ}, also the linear
and the asymptotic linear case have been studied, whereas in
\cite{PiS1,PiS2,S} the problem has been studied in a bounded
domain. We refer to \cite{BF1} for more details on the physical
origin of this system.

Recently, the Schr\"odinger equation and the Schr\"odinger-Poisson
system in presence of a general nonlinear term have been
intensively studied by many authors. Using similar assumptions on
the nonlinearity $g$, \cite{AP,JT} and \cite{PS} studied,
respectively, a nonlinear Schr\"odinger equation in presence of an
external potential and a system of weakly coupled nonlinear
Schr\"odinger equations. The Schr\"odinger-Poisson system has been
considered in \cite{ADP}. We mention also \cite{BF2,M} where the
Klein-Gordon, Klein-Gordon-Maxwell and Schr\"odinger Poisson
equations have been considered in presence of the so called
``positive potentials''.

It is well known that the system \eqref{SP} is equivalent to an
equation containing a nonlocal nonlinear term. A non trivial
difficulty in applying concentration and compactness to this
equation in presence of a Berestycki \& Lions type nonlinearity,
consists in the fact that, since $g$ does not have any homogeneity
property, we can not use the usual arguments as in the pure power
case to avoid dichotomy (see \cite{AP08}). In order to overcome
this difficulty, we need to study the behaviour of the functional
associated to the problem with respect to rescaled functions.
However, when we rescale the variables, the behaviour of the
integral term coming from the nonlocal nonlinearity is such to
prevent us from using a direct approach. So we introduce a
modified functional, where a cut off function is introduced to
control the integral containing the {\it coupling term}. Finally,
we observe that, for $q$ small enough, the modified functional
corresponds with the original one computed on suitable minimizing
sequences. Observe that, for our analysis, it is fundamental the
invariance of the domain with respect to rescalements.

The main result of this paper is the following:
    \begin{theorem}\label{main}
        Assume ({\bf g1}),...,({\bf g4}). Then there exists $q>0$
        such that the system \eqref{SP} possesses a solution $(u,\phi)\in\H\times\D$
        with the
        following features
            \begin{enumerate}
                \item $u$ and $\phi$ are respectively odd and even with respect to the third
                variable,
                \item $u$ and $\phi$ are cylindrically symmetric with
                respect to the first two variables,
                \item $u$ is positive on the half space $x_3>0$
                (and, consequently, negative in the half space
                $x_3<0$), $\phi$ is positive everywhere.
            \end{enumerate}
    \end{theorem}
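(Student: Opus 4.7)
The plan is to reformulate \eqref{SP} as a single nonlocal equation, use the cylindrical-odd symmetry as a ``natural constraint'', and then handle the resulting lack of compactness through a concentration-compactness argument on a truncated functional which, for $q$ small, coincides with the original one. For $u\in\H$ let $\phi_u\in\D$ denote the unique weak solution of $-\Delta\phi=qu^2$; then $(u,\phi)$ solves \eqref{SP} iff $u$ is a critical point of
\[
I_q(u)=\frac12\irt|\n u|^2+\frac{q^2}{4}\irt\phi_u u^2-\irt G(u).
\]
Since $g$ is odd and the system is invariant under rotations about the $x_3$-axis and under the joint action $(x_3,u)\mapsto(-x_3,-u)$, $I_q$ is invariant on the subspace $\Hco$. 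By the Palais principle of symmetric criticality it suffices to find a nontrivial critical point of $I_q$ restricted to $\Hco$, which automatically encodes items~1 and~2 of the statement.

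Following the Berestycki--Lions strategy I would look for critical points via a scaling-constrained minimization. The rescaling $u_t(x)=u(x/t)$ gives
\[
I_q(u_t)=\frac t2\|\n u\|_2^2+\frac{q^2 t^5}{4}\irt\phi_u u^2-t^3\irt G(u),
\]
so the nonlocal term scales as $t^5$ while the local pieces scale as $t$ and $t^3$. The essential obstruction signalled in the introduction is that, since $g$ is non-homogeneous, the $t^5$ factor obstructs the standard sub-additivity argument used to rule out dichotomy. To fix this I would introduce, for a large fixed $T$, a modified functional $\Itq$ in which the coupling term $q^2\irt\phi_u u^2$ is composed with a smooth truncation at level $T$; this restores a scaling behaviour close to that of the purely local Berestycki--Lions problem and allows one to set up a minimization level $\mtq$ that enjoys good compactness properties.

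I would then apply the Lions concentration-compactness principle to a minimizing sequence $(u_n)\subset\Hco$ for $\mtq$. Vanishing is excluded by (\textbf{g2})--(\textbf{g4}) via the usual comparison of $\irt G(u_n)$ with $\irt u_n^2$ and $\irt|u_n|^{p+1}$. Dichotomy is excluded using the controlled scaling of $\Itq$, which yields a strict sub-additivity inequality $\mtq<\mtq(\t)+\mtq(1-\t)$ for every splitting $\t\in(0,1)$. Concentration is then automatic, and the symmetry built into $\Hco$ is crucial: cylindrical invariance absorbs horizontal translations, while oddness in $x_3$ prevents the concentration profile from drifting off along the $x_3$-axis without destroying the odd structure. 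I expect this step to be the technical heart of the argument: one must make the truncation simultaneously compatible with the Pohozaev identity, with the sub-additivity estimate, and with the symmetry constraint.

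To conclude, I would choose $q$ small enough so that any minimizer $u$ of the modified functional satisfies $q^2\irt\phi_u u^2<T$; then the cut-off is inactive and $u$ is a critical point of $I_q$ restricted to $\Hco$, hence, by symmetric criticality, a solution of \eqref{SP}. Positivity of $\phi=\phi_u$ follows from the representation $\phi_u=q(4\pi|\cdot|)^{-1}*u^2$. For the sign of $u$ on $\{x_3>0\}$, the function $v$ equal to $|u|$ on $\{x_3>0\}$ and to $-|u|$ on $\{x_3<0\}$ belongs to $\Hco$ and has the same energy as $u$ (because $G$ is even and $v^2=u^2$), so it is again a minimizer; the strong maximum principle applied on the open half-space $\{x_3>0\}$ to the resulting linear equation with a locally bounded potential then yields strict positivity there, completing item~3.
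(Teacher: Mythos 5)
Your overall strategy coincides with the paper's: reduction to a single nonlocal functional, the natural constraint $\Hco$ via symmetric criticality, a truncated functional to tame the $t^5$-scaling of the coupling term, concentration--compactness with $q$ small, and the $|u|$-reflection plus strong maximum principle for item~3. There is, however, one genuine gap. You assert that dichotomy is fully excluded by strict subadditivity because ``oddness in $x_3$ prevents the concentration profile from drifting off along the $x_3$-axis.'' It does not: a function consisting of a bump near $(0,0,t_n)$ together with its odd reflection near $(0,0,-t_n)$, with $t_n\to\infty$, is odd in $x_3$ and cylindrically symmetric, and its mass concentrates in two mutually receding balls. In Lions' trichotomy this is dichotomy, and it cannot be killed by the subadditivity argument, because any cut-off compatible with the symmetry of $\Hco$ must keep the two bumps together, so the splitting parameter degenerates to $\l=1$ and the strict inequality $\widetilde m^T_q>\l\, m^T_q$ yields nothing. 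The paper isolates precisely this residual case (Theorem \ref{th:conc}: either concentration at the origin, or an exact $c/2$--$c/2$ split at $\pm\xi_n$ on the $x_3$-axis) and disposes of it by a recombination step (Theorem \ref{th:inf}): one cuts off the two bumps and translates them back to $x_3=\pm 3R$, obtaining a new minimizing sequence in $\M\cap\Hco$ that concentrates in a fixed ball. Without this step your argument does not produce a convergent minimizing sequence, so the existence of a minimizer is not established.

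Two secondary inaccuracies. First, the smallness of $q$ is needed in the paper to make the subadditivity estimate close (the rescaled nonlocal term must be dominated, giving the condition $q<\sqrt{a/(bT^4)}$ in the proof of Theorem \ref{th:conc}); the cut-off is inactive on minimizing sequences for every $q\le\bar q$ once $T$ is large, by the a priori bound of Lemma \ref{le:cTq}, not because $q$ is small. Second, a minimizer of $J_q$ on $\M=\{u\in\Hco \mid \irt G(u)=1\}$ is not a critical point of the free functional: it solves the system only up to a positive Lagrange multiplier $\l$, and one must rescale $x\mapsto x/\sqrt\l$ (Theorem \ref{sol}) to obtain a solution of \eqref{SP} with coupling constant $q/\l$ --- which is exactly why the statement only claims existence for \emph{some} $q>0$. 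Your plan omits this passage from constrained minimizer to solution.
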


    The paper is organized as follows:

    in section \ref{sec:fun} we introduce the functional framework
    of the problem. In particular, we define a space of functions
    described by symmetry properties that no radial nontrivial function
    possesses. Then we reduce the study to a minimization problem.

    In section \ref{sec:comp}, we study the behaviour of the
    positive measures associated to the functions of a minimizing
    sequence, and we look for concentration on a bounded region.

    In section \ref{sec:proof} we provide the proof of the main
    theorem.

\section{The functional setting}\label{sec:fun}

We denote by $\H,$ $\D,$ $L^p(\RT)$ the usual Sobolev and Lebesgue
spaces with the respective norms:
    \begin{align*}
        \|u\| &=\left(\irt |\n u|^2 + u^2\right)^{\frac 12}\\
        \|u\|_{\D} &=\left(\irt |\n u|^2\right)^{\frac 12}\\
        \|u\|_p    &=\left(\irt |u|^p\right)^{\frac 1 p}.
    \end{align*} We first recall the following
well-known facts (see, for instance \cite{DM1}).
\begin{lemma}\label{le:prop}
For every $u\in \H$, there exists a unique $\phi_u\in \D$ solution
of
\[
-\Delta \phi=q u^2,\qquad \hbox{in }\RT.
\]
Moreover
\begin{itemize}
\item[i)] $\|\phi_u\|^2_{\D}=q\irt\phi_u u^2$; \item[ii)]
$\phi_u\ge 0$; \item[iii)] for any $\t>0$:
$\phi_{u_\t}(x)=\t^2\phi_u(x/\t)$, where $u_\t(x)=u(x/\t )$;
\item[iv)] there exist $C,C'>0$ independent of $u\in\H$ such that
$$\|\phi_u\|_{\D}\le C q \|u\|^2,$$
and
\begin{equation}\label{eq:phiq}
\irt\phi_u u^2\le C'q \|u\|^4.
\end{equation}
\end{itemize}
\end{lemma}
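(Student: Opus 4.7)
The plan is to establish existence and uniqueness of $\phi_u$ via the Riesz representation theorem applied on $\D$ endowed with the inner product $\langle\phi,\psi\rangle:=\irt\n\phi\cdot\n\psi$. The main step is to show that for $u\in\H$ the map $\psi\mapsto q\irt u^2\psi$ defines a continuous linear functional on $\D$: by H\"older with exponents $(6/5,6)$ and the Sobolev embedding $\D\hookrightarrow L^6(\RT)$,
\[\left|\irt u^2\psi\right|\le \|u\|_{12/5}^2\,\|\psi\|_6\le C\|u\|^2\,\|\psi\|_{\D},\]
where $\|u\|_{12/5}\le C'\|u\|$ follows by interpolation between $L^2(\RT)$ and $L^6(\RT)$ (since $12/5\in[2,6]$ and $\H$ embeds in both). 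Riesz then produces a unique $\phi_u\in\D$ satisfying $\irt\n\phi_u\cdot\n\psi=q\irt u^2\psi$ for all $\psi\in\D$.

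Property (i) is obtained by testing the weak formulation against $\phi_u$ itself. For (ii), I would use the negative part $\phi_u^-:=\min(\phi_u,0)\in\D$ as a test function; since $\n\phi_u\cdot\n\phi_u^-=|\n\phi_u^-|^2$ a.e., this produces $\|\phi_u^-\|_{\D}^2=q\irt u^2\phi_u^-\le 0$, so $\n\phi_u^-\equiv 0$, and combined with $\phi_u^-\in L^6(\RT)$ this forces $\phi_u^-\equiv 0$. For (iii), I set $v_\t(x):=\t^2\phi_u(x/\t)$; a direct chain-rule computation gives $-\Delta v_\t(x)=q\,u(x/\t)^2=q\,u_\t(x)^2$, and a change of variables shows $v_\t\in\D$, so uniqueness of the solution forces $\phi_{u_\t}=v_\t$.

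For (iv), I would combine (i) with the continuity estimate from the first step:
\[\|\phi_u\|_{\D}^2=q\irt u^2\phi_u\le Cq\|u\|^2\,\|\phi_u\|_{\D},\]
giving $\|\phi_u\|_{\D}\le Cq\|u\|^2$; substituting back into (i) yields $\irt\phi_u u^2\le C'q\|u\|^4$. The only nonroutine point is justifying the maximum-principle step (ii) on an unbounded domain, which reduces to the standard observations that $\phi_u^-\in\D$ whenever $\phi_u\in\D$ and that the only constant function belonging to $L^6(\RT)$ is zero; everything else is routine bookkeeping with Sobolev exponents.
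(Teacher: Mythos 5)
Your proof is correct and is essentially the standard argument (Riesz representation on $\D$, testing with $\phi_u$ and $\phi_u^-$, scaling plus uniqueness, and the H\"older--Sobolev--interpolation chain $\H\hookrightarrow L^{12/5}(\RT)$) that the paper itself does not reproduce but simply defers to the cited reference \cite{DM1}. All the steps, including the maximum-principle argument on the unbounded domain via $\n\phi_u^-\equiv 0$ and $\phi_u^-\in L^6(\RT)$, are sound.
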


Following \cite{BL1}, define $s_0:=\min\{s\in
[\zeta,+\infty[\;\mid g(s)=0\}$ ($s_0=+\infty$ if $g(s)\neq 0$ for
any $s\ge\zeta$) and set $\tilde g:\R\to\R$ the function such that
    \begin{equation}\label{eq:tilde}
      \tilde g(s)=\left\{
        \begin{array}{ll}
                g(s) &\hbox{ on } [0,s_0];
                \\
                0 &\hbox{ on } \R_+\setminus [0,s_0];
                \\
                -\tilde g(-s) &\hbox{ on } \R_-.
      \end{array}
      \right.
    \end{equation}
By the strong maximum principle and by ii) of Lemma \ref{le:prop},
a solution of \eqref{SP} with $\tilde g$ in the place of $g$ is a
solution of \eqref{SP}. So we can suppose that $g$ is defined as
in \eqref{eq:tilde}, so that ({\bf g1}), ({\bf g2}) and ({\bf g4})
hold, and we have also the following limit
    \begin{equation}\label{eq:limg}
        \lim_{s\to\infty} \frac{|g(s)|}{|s|^{p}}=0.
    \end{equation}
Moreover, we set for any $s\ge 0,$
    \begin{align*}
        g_1(s) & :=(g(s)+\o s)^+,
        \\
        g_2(s) & :=g_1(s)-g(s),
    \end{align*}
and we extend them as odd functions.
\\
Since
    \begin{align}
        \lim_{s\to 0}\frac{g_1(s)}{s} &= 0,\nonumber
        \\
        \lim_{s\to\infty}\frac{g_1(s)}{|s|^{p}}&=0,\label{eq:lim2}
    \end{align}
and
    \begin{equation}
        g_2(s) \ge \o s,\quad  \forall s\ge 0,\label{eq:g2}
    \end{equation}
by some computations, we have that for any $\eps>0$ there exist
$C_\eps,$ $C'_\eps>0$ such that
    \begin{align}
        g_1(s) &\le C_\eps s^{p}+\eps s,\quad  \forall
        s\ge0\label{eq:g1g2a}\\
        g_1(s) &\le C'_\eps s^{5}+\eps s,\quad  \forall
        s\ge0\label{eq:g1g2abis}\\
        g_1(s) &\le C_\eps s^{p}+\eps g_2(s),\quad  \forall
        s\ge0\label{eq:g1g2}\\
        g_1(s) &\le C'_\eps s^{5}+\eps g_2(s),\quad  \forall
        s\ge0\label{eq:g1g2bis}.
    \end{align}
If we set
    \begin{equation*}
        G_i(t):=\int^t_0g_i(s)\,ds,\quad i=1,2,
    \end{equation*}
then, by \eqref{eq:g2}, we have
    \begin{equation}
         G_2(s) \ge \frac \o 2 s^2,\quad  \forall s\in\R\label{eq:G2}
    \end{equation}
and by \eqref{eq:g1g2a}, \eqref{eq:g1g2abis}, \eqref{eq:g1g2} and
\eqref{eq:g1g2bis}, for any $\eps>0$ there exists $C_\eps>0$ and
$C'_\eps>0$ such that
    \begin{align}
        G_1(s) &\le \frac {C_\eps} 6 |s|^{6}+\eps s^2,\quad  \forall
        s\in\R\nonumber\\
        G_1(s) &\le \frac {C'_\eps} {p+1} |s|^{p+1}+\eps s^2,\quad  \forall
        s\in\R\label{eq:G1G2a}\\
        G_1(s) &\le \frac {C_\eps} 6 |s|^{6}+\eps G_2(s),\quad  \forall
        s\in\R\label{eq:G1G2}\\
        G_1(s) &\le \frac {C'_\eps} {p+1} |s|^{p+1}+\eps G_2(s),\quad  \forall
        s\in\R\label{eq:G1G2b}.
    \end{align}

The solutions $(u,\phi)\in \H \times \D$ of \eqref{SP} are the
critical points of the action functional $\mathcal{E}_q \colon \H
\times \D \to \R$, defined as
\[
\mathcal{E}_q(u,\phi):=\frac 12 \irt |\n u|^2 -\frac 14 \irt |\n
\phi|^2 +\frac q2 \irt \phi u^2 -\irt G(u).
\]

The action functional $\E_q$ is strongly indefinite in the sense
that it is unbounded both from below and from above on infinite
dimensional subspaces. The indefiniteness can be removed using the
reduction method, by which we are led to study a one variable
functional that does not present such a strongly indefinite
nature. Indeed, it can be proved that $(u,\phi)\in H^1(\RT)\times
\D$ is a solution of \eqref{SP} (critical point of functional
$\mathcal{E}_q$) if and only if $u\in\H$ is a critical point of
the functional $J_q\colon \H\to \R$ defined as
\begin{equation*}
J_q(u)= \frac 12 \irt |\n u|^2 + \frac q4 \irt \phi_u u^2 -\irt
G(u),
\end{equation*}
and $\phi=\phi_u$.

Now, let ${\mathcal O}(2)$ denote the orthogonal group of the
rotation matrices in $\R^2$, that is
$$\OO(2)=\left\{\Big(
            \begin{array}{lr}
                \cos\alpha & -\sin\alpha\\
                \sin\alpha & \cos\alpha
            \end{array}
        \Big)\Big|\alpha\in[0,2\pi)\right\}.$$ For any $g\in\OO(2)$
define  the following action $\TT_g$ on $\H$:
$${\mathcal T}_g u(x)=
-u(\tilde g x)\in\H,\quad \tilde{g}=\Big(
            \begin{array}{lr}
                g & 0\\
                0 & -1
            \end{array}
        \Big).$$
        Now we set $$\Hco=\{u\in {\mathcal D}^1(\R^3,\R^3) \,|\, \TT_g u= u\;\;\forall g\in
{\mathcal
        O}(2)\}.$$
It is easy to see that $\Hco$ is the setting of the functions
cylindrically symmetric with respect to $(x_1,x_2)$ and odd with
respect to $x_3$.

Since $g$ is odd (and consequently $G$ is even) and since we have
that for any $u\in\H$ and $g\in\OO(2)$
    \begin{equation}\label{eq:sy}
        -{\mathcal T}_g \phi_u= \phi_{{\mathcal T}_g u}
    \end{equation}
by the Palais' symmetrical criticality principle we can prove that
$\Hco$ is a natural constraint for the action functional $J_q$
(see \cite{DA} for details).\\
We point out that, since $u\in\Hco$, we have that $\phi_u\in\Dce,$
the set of the functions in $\D$ that are cylindrically symmetric
with respect to the first two variables, and even with respect to
the third. To improve the notations, we will often use $r$ in the
place of $\sqrt {x_1^2+x_2^2}$.

We will proceed as follows: we consider the manifold
    \begin{equation}\label{eq:man}
        \M=\{u\in\Hco\mid \irt G(u)=1\}.
    \end{equation}
As proved in \cite{AP2} (see also \cite{BL1}), $\M$ is nonempty.
Consider indeed a family of functions
$\rho_R(r,x_3)=\xi\alpha_R(r)\beta_R(x_3),$ for $R>1,$ with

\begin{equation*}
\alpha_R(t):=\left\{
\begin{array}{ll}
1           &\hbox{\rm if}\quad |t|< R,
\\
R+1-|t|     &\hbox{\rm if}\quad R\le |t|< R+1,
\\
0           &\hbox{\rm otherwise},
\end{array}
\right.
\end{equation*}
and
\begin{equation*}
\beta_R(t):=\left\{
\begin{array}{ll}
0           &\hbox{\rm if}\quad 0< t \le 1\\
t-1         &\hbox{\rm if}\quad 1< t\le 2,
\\
1           &\hbox{\rm if}\quad 2< t\le R,
\\
R+1-t       &\hbox{\rm if}\quad R< t\le R+1,
\\
-\beta_R(-t)&\hbox{\rm if}\quad t\le 0.
\end{array}
\right.
\end{equation*}
We have $\rho_R\in\Hco,$ and for large $\bar R$
    $$\irt G(\rho_{\bar R})>0.$$
So, if $\sigma$ is a suitable rescaling parameter, the function
$$\rho_{\bar R,\sigma}:(r,x_3)\mapsto \rho_{\bar R}(\sigma r,\sigma x_3)$$
belongs to $\M.$

Then, we consider the functional
    \begin{equation}\label{eq:J}
        J_q(u)=\frac 1 2 \irt |\n u|^2 +\frac q 4 \irt \phi_uu^2
    \end{equation}
restricted on $\M,$ and we look for a minimizer $\bar u.$\\
Solving the minimizing problem, we find a Lagrange multiplier
$\l\in\R$ such that the tern $(\bar u, \phi_{\bar u}, \l)$ solves
the system
        \begin{equation*}
\left\{
\begin{array}{ll}
-\Delta u+q\phi u=\mu g(u)&\hbox{in }\RT,
\\
-\Delta \phi=q u^2&\hbox{in }\RT.
\end{array}
\right.
\end{equation*}
Then we apply the following
    \begin{theorem}\label{sol}
        Let $\bar u\in \M$ a minimizer for $J_q|_\M,$ and let $\l$ be
        the Lagrange multiplier. Then $\l$ is positive, and the couple
        $(\tilde u, \tilde \phi)\in \Hco\times\Dce$ defined
        rescaling as follows
            \begin{equation}\label{eq:realsol}
                \tilde u = \bar u(\cdot/\sqrt\l)\qquad\tilde\phi =
                \phi_{\bar u}(\cdot/\sqrt\l)
            \end{equation}
        solves the system
        \begin{equation}\label{eq:realsys}
\left\{
\begin{array}{ll}
-\Delta u+q'\phi u=g(u)&\hbox{in }\RT,
\\
-\Delta \phi=q' u^2&\hbox{in }\RT.
\end{array}
\right.
\end{equation}
        with $q'=q/\l.$
    \end{theorem}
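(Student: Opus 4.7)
The plan is to follow the standard constrained-minimization recipe: extract the Euler--Lagrange equation from the minimizing property of $\bar u$, pin down the sign of the Lagrange multiplier $\l$ by a scaling / nonlocal Pohozaev argument, and then undo the multiplier by a change of variable.

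Step 1. Since $\bar u$ minimizes $J_q$ on the natural constraint $\M$ inside $\Hco$, the Lagrange multiplier rule (combined with the Palais symmetric criticality principle, as in \cite{DA}) gives a real number $\l$ with
$-\Delta\bar u+q\phi_{\bar u}\bar u=\l\,g(\bar u)$
weakly in $\H$; the Poisson part of \eqref{SP} is already encoded in $\phi_{\bar u}$ by Lemma \ref{le:prop}.

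Step 2. To show $\l>0$ I would use a scaling argument. Put $\bar u_t(x):=\bar u(x/t)$ for $t>0$; Lemma \ref{le:prop}(iii) and a change of variable yield
$J_q(\bar u_t)=\tfrac{t}{2}\|\nabla\bar u\|_2^2+\tfrac{q t^5}{4}\irt\phi_{\bar u}\bar u^2$ and $\irt G(\bar u_t)=t^3\,\irt G(\bar u)=t^3$. Since $\bar u$ is a critical point of the Lagrangian $L(u):=J_q(u)-\l\bigl(\irt G(u)-1\bigr)$ on $\H$, the function $t\mapsto L(\bar u_t)$ is stationary at $t=1$, which after differentiating the explicit formulas above yields the nonlocal Pohozaev-type identity
$\tfrac12\|\nabla\bar u\|_2^2+\tfrac{5q}{4}\irt\phi_{\bar u}\bar u^2=3\l.$
Because $\bar u\in\M$ forces $\bar u\not\equiv 0$, the left-hand side is strictly positive, so $\l>0$.

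Step 3. Knowing $\l>0$, the pair $\tilde u=\bar u(\cdot/\sqrt\l)$, $\tilde\phi=\phi_{\bar u}(\cdot/\sqrt\l)$ solves \eqref{eq:realsys} by direct substitution: the change of variable $y=x/\sqrt\l$ turns the Euler--Lagrange equation of Step 1 into $-\l\Delta\tilde u+q\tilde\phi\,\tilde u=\l g(\tilde u)$, which, divided by $\l$, is the first line of \eqref{eq:realsys} with $q'=q/\l$; and Lemma \ref{le:prop}(iii) gives $-\Delta\tilde\phi(x)=\l^{-1}q\,\bar u(x/\sqrt\l)^2=q'\tilde u(x)^2$, so $\tilde\phi$ solves the Poisson equation with coupling $q'$, i.e.\ $\tilde\phi=\phi_{\tilde u}$.

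The main obstacle is the identity in Step 2: the nonhomogeneity of $G$ prevents the usual trick of rescaling $\bar u$ onto $\M$ by a scalar factor, and the nonlocal Coulomb term requires care. One must either (a) appeal to enough regularity and decay on $\bar u$ (via elliptic bootstrap) to multiply the Euler--Lagrange equation by $x\cdot\nabla\bar u$ and integrate by parts, using Lemma \ref{le:prop}(i) to reduce the Coulomb contribution to $-\tfrac{5q}{4}\irt\phi_{\bar u}\bar u^2$, or (b) justify the scaling computation above by showing that the curve $t\mapsto\bar u_t$ gives a valid duality pairing with $L'(\bar u)=0$ despite the velocity $-x\cdot\nabla\bar u$ not necessarily belonging to $\H$.
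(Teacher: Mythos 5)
Your proposal follows essentially the same route as the paper: the paper verifies the rescaled system by the identical direct substitution, and for $\l>0$ it simply cites \cite[pg.\ 327]{BL1}, whose argument is precisely the scaling/Pohozaev computation you write out (your identity $\tfrac12\|\nabla\bar u\|_2^2+\tfrac{5q}{4}\irt\phi_{\bar u}\bar u^2=3\l$, with the $t^5$ scaling of the Coulomb term from Lemma \ref{le:prop}(iii), is the correct adaptation to the nonlocal case). The justification issue you flag in your final paragraph is genuine but is glossed over by the paper as well, so there is no substantive divergence.
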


\section{Compactness}\label{sec:comp}
In this section we present the main tool to get our result. We
first
need to introduce some notations and definitions.\\
Set $m_q=\inf_{u\in\M}J_q(u),$ and denote by $(u_n)_n:=(u^q_n)_n$
a sequence such that
    \begin{equation}\label{eq:min}
        u_n\in \M\qquad\hbox{and}\qquad J_q(u_n)\to m_q
    \end{equation}
and by $\phi_n=\phi_{u_n}$.

As in \cite{ADP,JL,K} we introduce the cut-off function $\chi\in
C^\infty(\R_+,\R)$ satisfying
\begin{equation}\label{eq:defchi}
    \left\{
\begin{array}{ll}
    \chi(s)=1,&\hbox{for }s\in[0,1],\\
    0\le \chi(s)\le 1,&\hbox{for }s\in]1,2[,\\
    \chi(s)=0,&\hbox{for }s\in[2,+\infty[,\\
    \|\chi '\|_\infty \le 2,&
\end{array}
    \right.
\end{equation}
and, for every $T>0$, we denote
\[
k_T(u)=\chi\left(\frac{\|u\|^2}{T^2} \right).
\]

Moreover, assume the following definitions
    \begin{align*}
        \Jtq(u)      &=\frac 1 2 \irt |\n u|^2 +\frac q 4 k_T(u) \irt
        \phi_uu^2\\
        \mu_n^{T,q}(\O)&=\frac 12 \int_\O|\n u_n|^2+\int_\O
        G_2(u_n)+\frac q 4 k_T(u_n)\int_\O \phi_n
        u_n^2,
    \end{align*}
where $\Omega\subset\R^3.$ Set also $\mtq=\inf_{u\in\M}\Jtq(u),$
and denote by $(u^{T,q}_n)_n$ a minimizing sequence of $\Jtq|_\M.$
It is trivial to see that $\mtq\le m_q\le m_{\bar q}$ for any
$T>0$ and any $q\le \bar q.$
    \begin{lemma}
        For any $T,q>0$ the measures $\mu_n^{T,q}$ are positive and bounded, i.e.
        $(\mu_n^{T,q}(\RT))_n$ is bounded. Moreover $\mu_n^{T,q}$
        is bounded $T-$uniformly.
    \end{lemma}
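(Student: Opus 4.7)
The plan is to verify three things in turn: positivity of $\mu_n^{T,q}$, finiteness of $\mu_n^{T,q}(\RT)$ for each $n$, and the uniform-in-$n$-and-$T$ upper bound.

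Positivity is the easy part. Each of the three summands in the definition of $\mu_n^{T,q}(\Omega)$ is pointwise nonnegative: $|\n u_n|^2\ge 0$ is clear; $G_2(u_n)\ge \frac{\o}{2}u_n^2\ge 0$ by \eqref{eq:G2}; and the last term is nonnegative because $\phi_n\ge 0$ by item ii) of Lemma \ref{le:prop}, while $0\le k_T(u_n)\le 1$ by \eqref{eq:defchi}. Hence $\mu_n^{T,q}$ is a positive measure on $\RT$.

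For boundedness, the key observation is that since $\chi\le 1$ one has $\Jtq\le J_q$, so $\mtq\le m_q$, and therefore along any minimizing sequence the quantity $\Jtq(u_n)$ is eventually bounded by $m_q+1$, with a bound that does \emph{not} involve $T$. Reading off the two nonnegative pieces of $\Jtq(u_n)=\frac 12\|\n u_n\|_2^2+\frac q4 k_T(u_n)\irt \phi_n u_n^2$, I immediately get a $T$-uniform bound on both $\|\n u_n\|_2^2$ and on $\frac{q}{4}k_T(u_n)\irt \phi_nu_n^2$, which handles the first and third summands of $\mu_n^{T,q}(\RT)$.

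The slightly less immediate step is the $\irt G_2(u_n)$ term. Here I would use the fact that $u_n\in\M$ and that $G=G_1-G_2$ (since $g_2=g_1-g$), so that
\[
\irt G_2(u_n)=\irt G_1(u_n)-1.
\]
Applying the inequality \eqref{eq:G1G2} with $\eps=\tfrac12$, together with the Sobolev embedding $\D\hookrightarrow L^6(\RT)$ and the already established $T$-uniform bound on $\|\n u_n\|_2$, I obtain
\[
\irt G_1(u_n)\le \frac{C_{1/2}}{6}\|u_n\|_6^6+\frac 12\irt G_2(u_n)\le C+\frac 12\irt G_2(u_n),
\]
with $C$ independent of both $n$ and $T$. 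Substituting this back yields $\irt G_2(u_n)\le 2(C-1)$, again uniformly in $n$ and $T$. I expect this absorption argument to be the only delicate point; everything else is routine consequences of Lemma \ref{le:prop} and the definition of the cut-off $\chi$. Summing the three estimates gives the desired $T$-uniform bound on $\mu_n^{T,q}(\RT)$.
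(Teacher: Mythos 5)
Your proof is correct and follows essentially the same route as the paper: positivity is read off termwise, the gradient and coupling terms are controlled by the minimizing property of $(u_n)_n$ together with $k_T\le 1$ (which gives the $T$-uniformity), and $\irt G_2(u_n)$ is bounded by combining the constraint $\irt G(u_n)=1$ with \eqref{eq:G1G2}, the Sobolev embedding into $L^6(\RT)$, and absorption of the $\eps\,G_2$ term. No gaps.
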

    \begin{proof}
        The positiveness is a trivial consequence of the
        definition of the measures.\\
        As to boundedness, by the very definition of $u_n$
        we have only to check if $(\irt G_2(u_n))_n$ is
        bounded. But by \eqref{eq:G1G2} we have
            \begin{equation}\label{eq:com}
                1+\irt G_2(u_n)=\irt G_1(u_n)\leq \irt
                \eps G_2(u_n)+C\irt|u_n|^6
            \end{equation}
        and then
            \begin{equation}\label{eq:com2}
                1+(1-\eps)\irt G_2(u_n)\leq C'\left(\int_\O|\n
                u_n|^2\right)^3
            \end{equation}
        for $0<\eps <1$ and $C,$ $C'$ suitable positive
        constants.\\
        The $T-$uniform boundedness is a consequence of the fact
        that for any $n\ge 1$ and for any $T>0$ $k_T(u_n)\leq 1.$
    \end{proof}
Let $c=c^T_q$ be the limit (up to a subsequence) of
$\mutq_n(\RT)$. Of course $c>0$ because, otherwise, we would
contradict \eqref{eq:com2}.

    \begin{lemma}\label{le:cTq}
        For any $\bar q$ there exists $\bar T$ such that
            \begin{equation}\label{eq:limsup}
                \limsup_n\|u^q_n\|\leq  T,\quad\limsup_n\|u_n^{T,q}\|\le T
            \end{equation}
        for all $q\le\bar q$ and $T\ge\bar T.$\\
        As a consequence, every a minimizing sequence for
        $J_q|_\M,$ is a minimizing sequence also for $\Jtq|_\M.$
    \end{lemma}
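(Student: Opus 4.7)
The plan is to produce a single $L^2$-Sobolev bound $L(\bar q)$ depending only on $\bar q$ (in particular independent of $T$ and of $q\le\bar q$), and then take any $\bar T$ with $\bar T^2 > L(\bar q)$. First I would bound the gradient part. Since $\phi_{u_n} u_n^2\ge 0$ and $0\le k_T\le 1$, both functionals dominate $\frac12\|\n u\|_2^2$, and together with $J_q(u_n^q)\to m_q$, $\Jtq(u_n^{T,q})\to \mtq$, and the trivial monotonicity $\mtq\le m_q\le m_{\bar q}$ noted just before the lemma, this yields $\limsup_n\|\n u_n^q\|_2^2\le 2m_{\bar q}$ and the same bound for $u_n^{T,q}$, uniformly in $T$.

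Next I would recycle the computation already carried out in the proof of the previous lemma. Since both sequences lie on $\M$, the chain \eqref{eq:com}--\eqref{eq:com2} applies verbatim and yields
\[
1+(1-\eps)\irt G_2(u_n)\le C'\,\|\n u_n\|_2^6.
\]
Choosing $\eps=\tfrac12$, inserting the gradient bound above, and using \eqref{eq:G2} (which gives $G_2(s)\ge\tfrac{\o}{2}s^2$) produces a bound $\|u_n\|_2^2\le Am_{\bar q}^3+B$ with constants independent of $q\le\bar q$ and of $T$. Adding the two pieces yields a constant $L(\bar q)$ with $\limsup_n\|u_n^q\|^2\le L(\bar q)$ and $\limsup_n\|u_n^{T,q}\|^2\le L(\bar q)$. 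Picking $\bar T$ strictly greater than $\sqrt{L(\bar q)}$ then makes both limsups strictly less than $T$ for every $T\ge\bar T$, which is \eqref{eq:limsup}.

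For the consequence the strictness is what matters: $\|u_n^q\|^2<T^2$ eventually forces $k_T(u_n^q)=1$ eventually, hence $\Jtq(u_n^q)=J_q(u_n^q)\to m_q$. Running the same argument for $(u_n^{T,q})$ gives $k_T(u_n^{T,q})=1$ eventually, so $J_q(u_n^{T,q})=\Jtq(u_n^{T,q})\to\mtq$; but $u_n^{T,q}\in\M$ forces $J_q(u_n^{T,q})\ge m_q$, so $\mtq\ge m_q$, and combined with the opposite inequality $\mtq=m_q$. Therefore $\Jtq(u_n^q)\to m_q=\mtq$, i.e.\ every minimizing sequence for $J_q|_\M$ is also minimizing for $\Jtq|_\M$.

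The only subtle point, and the step where one has to be careful, is the $T$-uniformity of the estimate: everything rests on the observation that $k_T\in[0,1]$ so the cut-off only weakens the functional, while the manifold constraint and the Berestycki--Lions splitting $G=G_1-G_2$ are entirely intrinsic to $\M$ and do not see $T$ at all. Once this is noted, the two limsup bounds merge into a single $\bar q$-dependent constant and the consequence follows mechanically.
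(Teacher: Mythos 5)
Your proposal is correct and follows essentially the same route as the paper: the gradient bound comes from the functional values together with $0\le k_T\le 1$ and $\mtq\le m_q\le m_{\bar q}$, the $L^2$ bound comes from the constraint $\irt G(u_n)=1$ via \eqref{eq:com2} and \eqref{eq:G2}, and the consequence is obtained by showing $k_T=1$ eventually so that $\mtq=m_q$. Your only deviation is a welcome one: choosing $\bar T$ strictly above the limsup bound, which makes the step ``$k_T(u_n)=1$ for large $n$'' airtight where the paper is slightly terse.
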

    \begin{proof}
        Fix $\bar q>0$ and $q\leq \bar q$ and consider a minimizing sequence $u_n=u^q_n$
        as in \eqref{eq:min}. Consider also $\bar T>0$ whose precise
        estimate will be given later, $T\ge\bar T$ and $(u_n^{T,q})_n$ a minimizing sequence of
        $\Jtq|_\M.$
        Certainly we have that
            \begin{equation}\label{eq:gr}
                \irt|\n u_n|^2\le 2 m_q + o_n(1)\le 2 m_{\bar q} + o_n(1).
            \end{equation}
        By \eqref{eq:G2} and \eqref{eq:com2} we have also
            \begin{align}\label{eq:l2}
                \irt |u_n|^2&\le \frac\o 2\irt G_2(u_n)\le C\big(\irt|\n
                u_n|^2\big)^3\nonumber\\
                &\le C' (2m_q + o_n(1))^3= 8
                C'm_q^3+o_n(1)\le 8
                C'm_{\bar q}^3+o_n(1).
            \end{align}
        Since $m^T_q\le m_q,$ the same estimates can be proved also for
        $(u_n^{T,q})_n.$
        By \eqref{eq:gr} and \eqref{eq:l2} we conclude the first part of the
        proof taking $\bar T>\max(2m_{\bar q},8C'm_{\bar q}^3).$

        To prove the final part of the theorem, it is sufficient
        to show that $m^T_q=m_q.$ But for a sufficiently large $\nu\ge 1$ and any $n\ge\nu,$
        by \eqref{eq:limsup} we have that $k_T(u_n^{T,q})=1$ and
        $\Jtq(u_n^{T,q})=J_q(u_n^{T,q})\ge m_q.$ We deduce that $m^T_q\ge
        m_q$ and then $m^T_q=m_q.$

    \end{proof}

By the concentration and compactness principle (see \cite{L1}),
one of the following holds:
\begin{description}
\item[\;\;{\it vanishing}\,{\rm :}] for all $R>0$
\[
\lim_n \sup_{\xi \in \RT}\int_{B_R(\xi)} d \mutq_n =0;
\]
\item[\;\;{\it dichotomy}\,{\rm :}] for a subsequence of
$(\mutq_n)_n,$ there exist a constant $\tilde c\in (0, c)$, $R>0$,
two sequences $(\xi_n)_n$ and $(R_n)_n$, with $R\le R_n$ for any
$n$ and $R_n\to +\infty,$ such that
\begin{equation}\label{eq:dic}
\int_{B_R(\xi_n)} d \mutq_n \to \tilde c,\;\; \int_{\RT \setminus
B_{R_n}(\xi_n)} d \mutq_n \to c -\tilde c,
\end{equation}
\item[\;\;{\it compactness}\,{\rm :}] there exists a sequence
$(\xi_n)_n$ in $\RT$ with the following property: for any $\d>0$,
there exists $r=r(\d)>0$ such that
\[
\int_{B_R(\xi_n)} d \mu_n \ge c -\d.
\]
\end{description}

    \begin{theorem}\label{th:nonv}
        Vanishing does not occur
    \end{theorem}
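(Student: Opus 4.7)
The plan is to argue by contradiction: assume vanishing holds for the measures $\mu_n^{T,q}$ and show this forces $\int_{\RT} G(u_n^{T,q})\to 0$, contradicting the constraint $u_n^{T,q}\in\M$.

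The first step uses the structure of $\mu_n^{T,q}$. Because $G_2(s)\ge \frac{\omega}{2}s^2$ pointwise (from \eqref{eq:G2}) and because the term involving $\phi_{u_n}$ in $\mu_n^{T,q}$ is nonnegative, for every $R>0$ and every $\xi\in\RT$ one has
\[
\frac{\omega}{2}\int_{B_R(\xi)} |u_n^{T,q}|^2 \;\le\; \int_{B_R(\xi)} G_2(u_n^{T,q}) \;\le\; \mu_n^{T,q}(B_R(\xi)).
\]
Hence the vanishing hypothesis forces $\sup_{\xi\in\RT}\int_{B_R(\xi)}|u_n^{T,q}|^2\to 0$ for every $R>0$.

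Next, I use the $H^1$-boundedness of $(u_n^{T,q})_n$, which follows from \eqref{eq:gr} and \eqref{eq:l2} in Lemma \ref{le:cTq}. Combined with the vanishing of local $L^2$ mass just established, the classical Lions lemma (the Vanishing alternative in $H^1(\RT)$, see \cite{L1}) yields $u_n^{T,q}\to 0$ strongly in $L^s(\RT)$ for every $s\in(2,6)$. In particular, since $1<p<5$, we have $2<p+1<6$, so $\|u_n^{T,q}\|_{p+1}\to 0$.

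Now I derive the contradiction using \eqref{eq:G1G2b}. Since $u_n^{T,q}\in\M$, we have $\int_{\RT} G(u_n^{T,q})=1$, i.e.
\[
\int_{\RT} G_1(u_n^{T,q}) \;=\; 1+\int_{\RT} G_2(u_n^{T,q}) \;\ge\; 1.
\]
Fix any $\eps\in(0,1)$. By \eqref{eq:G1G2b} there exists $C'_\eps>0$ with
\[
\int_{\RT} G_1(u_n^{T,q}) \;\le\; \frac{C'_\eps}{p+1}\|u_n^{T,q}\|_{p+1}^{p+1} + \eps\int_{\RT}G_2(u_n^{T,q}).
\]
Combining the two displays and rearranging,
\[
1 \;\le\; \frac{C'_\eps}{p+1}\|u_n^{T,q}\|_{p+1}^{p+1} + \eps\bigl(1+o(1)\bigr) \cdot \text{(bound on $\int G_2$)},
\]
where I use that $\int G_2(u_n^{T,q})$ is uniformly bounded by \eqref{eq:com2}. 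Letting $n\to\infty$ the first term vanishes, so choosing $\eps$ small enough makes the right-hand side strictly less than $1$, a contradiction.

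The only slightly delicate point is the step from vanishing of $\mu_n^{T,q}$ to vanishing of the local $L^2$ mass of $u_n^{T,q}$; this is where the specific composition of $\mu_n^{T,q}$ (in particular, the presence of the $G_2$ term, together with the lower bound \eqref{eq:G2}) plays an essential role. Once that is secured, Lions' lemma together with \eqref{eq:G1G2b} — the inequality that trades $G_1$-mass against both $L^{p+1}$-mass and an arbitrarily small fraction of $G_2$-mass — closes the argument cleanly without needing any homogeneity of $g$.
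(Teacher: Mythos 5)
Your argument is correct and follows essentially the same route as the paper: vanishing of $\mu_n^{T,q}$ plus the lower bound \eqref{eq:G2} gives local $L^2$ vanishing, Lions' lemma then gives $\|u_n^{T,q}\|_{p+1}\to 0$, and \eqref{eq:G1G2b} together with the constraint $\int_{\RT}G(u_n^{T,q})=1$ yields the contradiction. The only cosmetic difference is that the paper rearranges to $1+(1-\eps)\int_{\RT}G_2(u_n)\le C'_\eps\int_{\RT}|u_n|^{p+1}\to 0$ rather than invoking a uniform bound on $\int_{\RT}G_2(u_n)$ and choosing $\eps$ small, but both closings are valid.
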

    \begin{proof}

Suppose by contradiction, that for all $R>0$
\[
\lim_n \sup_{\xi \in \RT}\int_{B_R(\xi)} d \mu^{T,q}_n =0.
\]
In particular, we deduce that there exists $\bar R>0$ such that
\begin{equation*}
\lim_n \sup_{\xi \in \RT}\int_{B_{\bar R}(\xi)} u_n^2=0.
\end{equation*}
By this and Lemma \ref{le:cTq}, we have that $u_n\to 0$ in
$L^s(\RT),$ for $2< s <6$ (see \cite[Lemma I.1]{L2}). As a
consequence, since $(u_n)_n \subset \M$ and by \eqref{eq:G1G2b},
we get for $0<\eps<1$ and $C'_\eps>0$
            \begin{equation*}
                1+\irt G_2(u_n)=\irt G_1(u_n)\leq \irt
                \eps G_2(u_n)+C'_\eps\irt|u_n|^{p+1}
            \end{equation*}
        and then
            \begin{equation*}
                1+(1-\eps)\irt G_2(u_n)\leq
                C'_\eps\irt|u_n|^{p+1}\to 0.
            \end{equation*}
    \end{proof}
        From now on, if the
        notation of a ball does not present explicitly expressed the center, than we assume
        it is the origin.
    \begin{theorem}\label{th:conc}
        For any $\bar q>0,$ there exist $\bar T>0$ such that for
        any $T\ge \bar T$ and a suitable $0<q(T)\le\bar q,$ either $\mu^{T,q(T)}_n$ concentrates in a ball
        $B_R$ (namely compactness holds for $\xi_n=(0,0,0),$ $n\ge 1$)
        or it exhibits the following dichotomic behaviour: there exist $R>0$ and
        a divergent sequence $\xi_n=(0,0,x^n_3)_n$ in $\R^3$ such that
            \begin{align*}
                \int_{B_R(\xi_n)} d \mutq_n &\to \frac c 2\\
                \int_{B_R(-\xi_n)} d \mutq_n &\to \frac c 2.
            \end{align*}
    \end{theorem}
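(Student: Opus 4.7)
The plan is to apply the concentration--compactness principle (\cite{L1}) to the measures $\mu_n^{T,q}$ and then to exploit the symmetries inherited from $\Hco$. Since each $u_n\in\Hco$, the density defining $\mu_n^{T,q}$ (built from $|\nabla u_n|^2$, $G_2(u_n)$ and $\phi_n u_n^2$, the last controlled through \eqref{eq:sy}) is invariant under the group $G$ generated by the $\OO(2)$-rotations of $(x_1,x_2)$ and the reflection $x_3\mapsto-x_3$; hence $\mu_n^{T,q}(gA)=\mu_n^{T,q}(A)$ for every $g\in G$ and every Borel set $A\subset\RT$. By Theorem \ref{th:nonv}, vanishing is excluded, and up to a subsequence we are in either the compactness or the dichotomy alternative, with associated centres $\xi_n=(y_n,x_3^n)\in\R^2\times\R$.

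\emph{Compactness case.} Fix $\delta<c/2$ and let $r=r(\delta)$ be as in the compactness definition; by $G$-invariance of $\mu_n^{T,q}$, $\int_{B_r(g\xi_n)}d\mu_n^{T,q}\ge c-\delta$ for every $g\in G$. If $|y_n|\to\infty$, one could place on the circle $\{|y|=|y_n|,\,x_3=x_3^n\}$ arbitrarily many rotated copies of $\xi_n$ with pairwise disjoint $r$-balls, forcing $\mu_n^{T,q}(\RT)>c$; similarly, if $|x_3^n|\to\infty$, then $B_r(\xi_n)$ and $B_r(-\xi_n)$ are eventually disjoint and together carry mass $\ge 2(c-\delta)>c$. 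Hence $\xi_n$ is bounded, and enlarging $r$ to absorb its bound gives the first alternative with $\xi_n\equiv(0,0,0)$.

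\emph{Dichotomy case.} The same disjoint-ball argument, now with threshold $\tilde c/2$ in place of $c-\delta$, shows $|y_n|$ is bounded, so up to enlarging $R$ we may take $\xi_n=(0,0,x_3^n)$; reflection invariance then yields $\int_{B_R(-\xi_n)}d\mu_n^{T,q}\to\tilde c$ as well. If $|x_3^n|$ were bounded, the $\tilde c$-bump would sit near the origin while the residual mass $c-\tilde c$ would escape to infinity in a $G$-symmetric way; reapplying concentration--compactness to this residual piece, and noting that each new bump must consume a fixed positive portion of $c$, the procedure terminates in finitely many steps and collapses back to one of the two alternatives of the statement. If $|x_3^n|\to\infty$, the balls $B_R(\pm\xi_n)$ are eventually disjoint, each with mass $\tilde c$, so $2\tilde c\le c$.

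The remaining and hardest step is to upgrade $2\tilde c\le c$ to equality, thereby excluding a positive residual mass $c-2\tilde c$ escaping in some other direction. This is where the freedom in choosing $q(T)\le\bar q$ enters. Introducing the auxiliary infima $m^T(a):=\inf\{\Jtq(u):u\in\Hco,\,\int G(u)=a\}$ (so that $\mtq=m^T(1)$) and using Lemma \ref{le:prop}(iii) to rescale each hypothetical additional piece back onto $\M$, for $q(T)$ small enough the coupling term becomes a lower-order perturbation and the analysis of $m^T$ reduces to that of the classical Berestycki--Lions functional, whose strict subadditivity along admissible splittings forbids any further split. Therefore $\tilde c=c/2$, which yields the second alternative and completes the proof.
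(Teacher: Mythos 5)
Your geometric steps (the disjoint-ball count in the $(x_1,x_2)$-plane forcing the first two components of $\xi_n$ to stay bounded, and the reflection symmetry producing the mirrored ball $B_R(-\xi_n)$ with the same mass) coincide with the paper's argument and are sound. The gap lies in the two places where dichotomy must actually be \emph{refuted}, which is the analytic core of the theorem and the only place where the cut-off $k_T$ and the smallness of $q(T)$ genuinely enter. First, for $(x_3^n)_n$ bounded you propose to ``reapply concentration--compactness to the residual piece'' and let the procedure ``terminate in finitely many steps'': this is not an argument --- there is no lower bound on the mass captured at each new stage and no reason the iteration collapses to one of the two stated alternatives. The paper instead shows that dichotomy with bounded $x_3^n$ is \emph{impossible}: it splits $u_n=v_n+w_n$ with cut-offs supported on $B_{R_n}\setminus B_R$, proves additivity of the gradient and $G_i$ terms, superadditivity of the nonlocal term \eqref{eq:phi0}, and derives $\widetilde\mtq+\overline\mtq\le\mtq$ together with a strict reverse inequality, a contradiction. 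The same splitting, performed symmetrically about the plane $x_3=0$, is what rules out $\tilde c<c/2$ in the unbounded case.

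Second, and more seriously, the strict subadditivity you invoke in your last paragraph is precisely the obstruction the paper identifies in its introduction: under the rescaling $v_n\mapsto v_n(\sqrt[3]{\l}\,\cdot)$ that restores the constraint, the gradient term picks up a factor $\sqrt[3]{\l}$ but the nonlocal term picks up $\sqrt[3]{\l^5}$ (Lemma \ref{le:prop}, iii)), i.e.\ it scales in the \emph{unfavourable} direction, so the functional does not reduce to ``the classical Berestycki--Lions functional plus a lower-order perturbation'' in any a priori sense. What saves the argument is quantitative: the cut-off gives the uniform bound $k_T(\tilde v_n)\irt\phi_{\tilde v_n}\tilde v_n^2\le qCT^4$, the constraint forces $\irt|\n\tilde v_n|^2$ to be bounded below by a positive constant, and the resulting inequality $\widetilde\mtq\ge\l\mtq+a(\sqrt[3]\l-\l)+bq^2(\sqrt[3]{\l^5}-\l)T^4$ yields $\widetilde\mtq>\l\mtq$ only under the explicit condition $q<\sqrt{a/(bT^4)}$ --- this is exactly where the admissible $q(T)\le\bar q$ comes from. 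Your proposal names the right ingredients but never produces this inequality; it also omits the case $\l\ge1$ (handled in the paper by rescaling $v_n$ back into $\M$ and comparing with $\mtq$) and the degenerate case $\widetilde\mtq=0$ or $\overline\mtq=0$, both of which need separate treatment. As written, the proof does not close.
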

    \begin{proof}
        Take $\bar q>0,$ and let $\bar T>0$ be as in Lemma \ref{le:cTq}.\\
        Set $T\ge \bar T.$
        Suppose that dichotomy holds and let $\tilde c\in (0, c)$, $R>0$, $(\xi_n)_n,$ $(R_n)_n$ be as in the
        dichotomy hypothesis.
        We prove that $(\xi_n)_n$ is bounded with respect to the
        first two variables. Otherwise, we should have $\xi_n\simeq(r_n,x^n_3)$
        with $r_n\to+\infty$ and
            \begin{equation}\label{eq:toc}
                \int_{B_R(\xi_n)} d \mutq_n = \tilde c + o_n(1).
            \end{equation}
        We deduce that there exists a positive constant $C>$ such that
            \begin{equation*}
                \int_{B_R(\xi_n)} |\n u_n|^2 + \int_{B_R(\xi_n)}G_2(u_n) \ge C
            \end{equation*}
        (otherwise, by \eqref{eq:phiq} and \eqref{eq:G2}, we would get a contradiction with \eqref{eq:toc}).
        But, for $r_n$ that goes to infinity, the set $B_{\xi_n+ R}(0)\setminus B_{\xi_n - R}(0)$ contains
        an increasing number of disjoint balls of the type $B_R(r',x_3^n)$, with $r_n=r':=\sqrt{(x_1')^2+(x_2')^2}$
        and, by the symmetry properties on $u_n$, for any $n\ge
        1,$
            \begin{equation*}
                \int_{B_R((r',x^3_n))} |\n u_n|^2 +
                \int_{B_R((r',x^3_n))}G_2(u_n)=
                \int_{B_R(\xi_n)} |\n u_n|^2 +
                \int_{B_R(\xi_n)}G_2(u_n).
            \end{equation*}
        As a consequence, we would have that
            \begin{equation*}
                \irt |\n u_n|^2 + \irt G_2(u_n) \to +\infty
            \end{equation*}
        that, taking \eqref{eq:com2} into account, brings a contradiction to Lemma
        \ref{le:cTq}.

        By the boundedness of $(\xi_n)_n$ with respect to $r_n$, it is not restrictive
        to suppose that such a sequence belongs to the
        $x_3-$axis. Indeed, for any $n\ge 1,$ the ball $B_R(\xi_n)$ is
        contained in $B_{R'}((0,0,x_3^n))$, where $R'=R+\sup_{n}|r_n|$.

        Now we consider the following possibilities:
            \begin{itemize}
                \item $(x_n^3)_n$ is bounded
                \item $(x_n^3)_n$ is unbounded.
            \end{itemize}
        If $(x_n^3)_n$ is bounded, all the balls of the type
        $B_R(\xi_n)$ are contained in $B_{R''}$, where $R''=R'+\sup_{n}|x_n^3|$.
        Replacing $R'$ by $R'',$ we have that
            \begin{equation}
                \int_{B_{R''}(0)} d \mutq_n = \tilde c + o_n(1).
            \end{equation}
        Consider a sequence of radially symmetric
        cut-off functions $\rho_n \in C^1(\RT)$ such that $\rho_n\equiv 1$ in
        $B_{R}(0)$, $\rho_n\equiv 0$ in $\RT \setminus
        B_{R_n}(0)$, $0\le \rho_n\le 1$ and $|\n \rho_n|\le 2/(R_n-R)$.
        \\
        We set
        \[
        v_n:=\rho_n u_n,\qquad w_n:=(1-\rho_n)u_n.
        \]
        Certainly $v_n$ and $w_n$ are in $\Hco$ and
            \begin{align}
                \|v_n\|&\le\|u_n\|+o_n(1)\label{eq:nor1}\\
                \|w_n\|&\le\|u_n\|+o_n(1).\label{eq:nor2}
            \end{align}
        If we denote $\O_n:=B_{R_n}\setminus B_R,$
        by dichotomy hypothesis we deduce that
            \begin{equation}\label{eq:to0}
                \int_{\O_n}|\n u_n|^2\to 0,\quad \int_{\O_n} G_2(u_n)\to 0,\quad
                \int_{\O_n} \phi_n u_n^2\to 0,
            \end{equation}
        and, in particular,
            \begin{equation}\label{eq:un0}
                \|u_n\|_{H^1(\O_n)}\to 0.
            \end{equation}
        Since for suitable $\eps,$ $C_\eps,$ and $C'>0$
            \begin{align}\label{eq:less}
                \int_{\O_n}G_1(u_n)&\le \eps \int_{\O_n}G_2(u_n)+ C_\eps \int_{\O_n}|u_n|^{p+1}\nonumber\\
                                   &\le \eps \int_{\O_n}G_2(u_n)+ C'\|u_n\|^{p+1}_{H^1(\O_n)},
            \end{align}
        we have also that
            \begin{equation}\label{eq:to00}
                \int_{\O_n}G_1(u_n)\to 0.
            \end{equation}
        Since by simple computations, using \eqref{eq:to0}, we have
            \begin{equation*}
                \int_{\O_n}|\n v_n|^2\to 0\hbox{ and } \int_{\O_n}|\n w_n|^2\to 0,
            \end{equation*}
        we easily infer that
            \begin{equation}\label{eq:gr0}
                \irt |\n u_n|^2 = \irt |\n v_n|^2 + \irt |\n w_n |^2 + o_n(1).
            \end{equation}
        Moreover we can prove also that
            \begin{align}
                \int_{\O_n} G_1(v_n)\to 0\quad &\int_{\O_n} G_2(v_n)\to 0\label{eq:to000}\\
                \int_{\O_n} G_1(w_n)\to 0\quad &\int_{\O_n} G_2(w_n)\to 0\label{eq:to0000}
            \end{align}
        Indeed, by \eqref{eq:G1G2a}, the growth conditions on $g$ and \eqref{eq:un0},
            \begin{align*}
                \int_{\O_n} G_1(v_n)&\le C(\int_{\O_n} |v_n|^2 + \int_{\O_n}|v_n|^{p+1})\\
                &\le C'(\|v_n\|_{H^1(\O_n)}^2+\|v_n\|_{H^1(\O_n)}^{p+1})\\
                &\le C'(\|u_n\|^2_{H^1(\O_n)}+\|u_n\|^{p+1}_{H^1(\O_n)}+o_n(1))=o_n(1)\\
                \int_{\O_n} G_2(v_n)&=-\int_{\O_n} G(v_n) +
                \int_{\O_n} G_1(v_n)\\
                &\le
                C(\int_{\O_n} |v_n|^2 + \int_{\O_n}|v_n|^{p+1})\le C'(\|v_n\|_{H^1(\O_n)}^2+\|v_n\|_{H^1(\O_n)}^{p+1})\\
                &\le C'(\|u_n\|_{H^1(\O_n)}^2+\|u_n\|_{H^1(\O_n)}^{p+1}+o_n(1))\le o_n(1),
            \end{align*}
        and we proceed analogously for $w_n.$
        By \eqref{eq:to0}, \eqref{eq:to00}, \eqref{eq:to000} and \eqref{eq:to0000}, we deduce that
            \begin{equation}
                \irt G_i(u_n)=\irt G_i(v_n) + \irt G_i(w_n) + o_n(1),\quad i=1,2.\label{eq:Gi0}
            \end{equation}
        Finally, as in \cite{AP08}, we have
            \begin{equation}\label{eq:phi0}
                \irt\phi_n u_n^2\ge\irt
                \phi_{v_n}v_n^2+\irt\phi_{w_n}w_n^2+o_n(1).
            \end{equation}
        By \eqref{eq:gr0}, \eqref{eq:Gi0} and \eqref{eq:phi0},
        taking into account that by
        \eqref{eq:nor1}, \eqref{eq:nor2} and Lemma \ref{le:cTq} we have
        $1=k_T(u_n)=k_T(v_n)=k_T(w_n),$ we deduce that
            \begin{equation*}
                \mtq=\Jtq(u_n)+o_n(1)\ge \Jtq (v_n) + \Jtq (w_n)+o_n(1)
            \end{equation*}
        and, as a consequence,
            \begin{align}
                \Jtq(v_n)&\to\widetilde \mtq\nonumber\\
                \Jtq(w_n)&\to\overline \mtq\label{eq:bah}
            \end{align}
        with $\widetilde \mtq+\overline \mtq\le\mtq.$

        For the moment, we assume that $\widetilde \mtq\neq 0$ and $\overline \mtq\neq 0.$

        We have to consider the following possibilities
            \begin{enumerate}
                \item[\it i)] {\it there exists $0<\l<1$ such that, up to subsequences,}
                    \begin{align*}
                        \irt G(v_n)&\to\l\\
                        \irt G(w_n)&\to1-\l.
                    \end{align*}

                Consider the rescaled functions so defined:
                $\tilde v_n(\cdot)= v_n(\sqrt[3]{\l}\,\cdot)$ and $\tilde w_n(\cdot)=
                w_n(\sqrt[3]{1-\l}\,\cdot)$ so that we respectively have
                    \begin{align*}
                        \Jtq(\tilde v_n)&\ge \mtq +o_n(1)\\
                        \Jtq(\tilde w_n)&\ge \mtq +o_n(1).
                    \end{align*}
                The following chain of inequalities holds
                    \begin{align}\label{eq:ch}
                        o_n(1) + \widetilde\mtq &= \Jtq(v_n)\nonumber\\
                        &= \frac {\sqrt[3]\l} 2\irt |\n\tilde v_n|^2\nonumber\\
                        &\qquad+\frac{q(\sqrt[3]\l)^5}{4}\chi\left(\frac {(\sqrt[3]\l)^2\|\n\tilde v_n\|_2^2+
                        \l^2\|\tilde v_n\|_2^2}{T^2}\right)\irt\phi_{\tilde v_n}\tilde v_n^2\nonumber\\
                        &\ge \frac {\sqrt[3]\l} 2 \irt |\n\tilde v_n|^2+ \frac{q(\sqrt[3]\l)^5}{4}\chi\left(
                        \frac{\|\tilde v_n\|^2}{T^2}\right)\irt\phi_{\tilde v_n}\tilde v_n^2\nonumber\\
                        &\ge \l \mtq + \left(\frac {\sqrt[3]\l-\l} 2\right)
                        \irt |\n\tilde v_n|^2\nonumber\\
                        &\qquad+ \frac{q(\sqrt[3]{\l^5}-\l)}{4}\chi\left(
                        \frac{\|\tilde v_n\|^2}{T^2}\right)\irt\phi_{\tilde v_n}\tilde v_n^2+o_n(1).
                    \end{align}
                Now observe that, since $\irt G(\tilde v_n)\to 1,$ computing as in \eqref{eq:com}
                and \eqref{eq:com2},
                    \begin{equation}\label{eq:boubel}
                        o_n(1)+ 1+(1-\eps)\irt G_2(\tilde v_n)\le C\left(\irt|\n\tilde v_n|^2\right)^3,
                    \end{equation}
                for $0< \eps <1,$ we deduce that $\|\n\tilde v_n\|_2$ is bounded below by a positive constant.
                Moreover, by \eqref{eq:phiq},
                    \begin{equation}\label{eq:bouup}
                        \chi\left(
                        \frac{\|\tilde v_n\|^2}{T^2}\right)\irt\phi_{\tilde v_n}\tilde v_n^2\le q C T^4,
                    \end{equation}
                so by \eqref{eq:ch}, \eqref{eq:boubel} and \eqref{eq:bouup}, for suitable
                $a,b>0$, we have
                    \begin{equation}\label{eq:comw}
                        \widetilde\mtq\ge \l \mtq + a (\sqrt[3]\l-\l)
                        +bq^2(\sqrt[3]{\l^5}-\l)T^4.
                    \end{equation}
                But
                    \begin{align*}
                        a (\sqrt[3]\l-\l)
                        +bq^2(\sqrt[3]{\l^5}-\l)T^4 &=
                        \sqrt[3]\l(1-\sqrt[3]{\l^2})(a-bq^2\l T^4)\\
                                                    &\ge \sqrt[3]\l(1-\sqrt[3]{\l^2})(a-bq^2T^4),
                    \end{align*}
                so, if we take $q<\sqrt {\frac a {bT^4}}$, from \eqref{eq:comw} we obtain
                $\widetilde\mtq>\l\mtq.$\\
                Repeating the same computations with $\tilde w_n$ in the place
                of $\tilde v_n,$ we can prove that $\overline\mtq>(1-\l)\mtq$. Summing up, we get
                    $$\mtq\ge\widetilde \mtq +\overline\mtq>\l\mtq+(1-\l)\mtq=\mtq$$
                and then a contradiction.
                \item[\it ii)] {\it there exists $\l\ge 1$ such that, up to subsequences,}
                    \begin{align*}
                        \irt G(v_n)&\to\l\\
                        \hbox{or}\\
                        \irt G(w_n)&\to\l.
                    \end{align*}
                Suppose that the first holds, and set $\l_n=\irt G(v_n)$ and $\tilde v_n=v_n(\sqrt[3]{\l_n}\,\cdot)\in\M$.
                We would have the following chain of inequalities
                    \begin{align}\label{eq:twice}
                        \mtq &\le \Jtq (\tilde v_n)=\frac1{2\sqrt[3]{\l_n}}\irt |\n v_n|^2+\frac q {4\sqrt[3]
                        {\l_n^5}}k_T(\tilde v_n)\irt\phi_{v_n}
                        v_n^2\nonumber\\
                        &\le\frac 1 2 \irt |\n v_n|^2 +  \frac q 4 k_T(v_n)\irt
                        \phi_{v_n}v_n^2 \to \widetilde\mtq < \mtq,
                    \end{align}
                where we have used the fact that $\|\tilde
                v_n\|^2\le\|v_n\|^2\le \|u_n\|^2 + o_n(1) < T^2$ to deduce that
                $k_T(\tilde v_n)= k_T(v_n)=1.$

                Now we remove the assumption that $\widetilde \mtq\neq
                0$ and $\overline \mtq\neq 0.$
                If, for instance, $\overline\mtq=0$, from \eqref{eq:dic} and \eqref{eq:bah} we would deduce that
                    \begin{align*}
                        \irt |\n w_n|^2 &\to 0\\
                        \irt G_2(w_n)&\ge \a + o_n(1)
                    \end{align*}
                with $\a >0.$\\
                Hence, by \eqref{eq:G1G2b}, for any $\eps>0$ we have
                    \begin{align*}
                        \irt G_1(w_n) &< \eps \irt G_2(w_n) +
                        C_\eps\irt |\n w_n|^2\\
                        & = \eps \a + C_\eps o_n(1) +
                        o_n(1),
                    \end{align*}
                and then $\irt G_1(w_n)\to0.$ So
                    \begin{align*}
                        1&=\irt G(u_n)=\irt G(v_n) + \irt G(w_n) +
                        o_n(1)\\
                         &=\irt G(v_n) - \irt G_2(w_n) + o_n(1)\\
                         &\le\irt G(v_n) - \a + o_n(1)
                    \end{align*}
                which implies that, up to subsequences, $\irt
                G(v_n)\to\l>1.$\\
                As in \eqref{eq:twice},
                    \begin{align*}
                        \mtq &\le \liminf_n\left(\frac1{2\sqrt[3]{\l_n}}\irt |\n v_n|^2+\frac q {4\sqrt[3]
                        {\l_n^5}}k_T(\tilde v_n)\irt\phi_{v_n}
                        v_n^2\right)\\
                             &\le \liminf_n\left(\frac1{2\sqrt[3]{\l}}\irt |\n v_n|^2+\frac q {4\sqrt[3]
                        {\l^5}}k_T(v_n)\irt\phi_{v_n}
                        v_n^2\right) \\
                             &< \liminf_n\left(\frac1{2}\irt |\n v_n|^2+\frac q {4}k_T(v_n)\irt\phi_{v_n}
                        v_n^2\right)=\widetilde\mtq=\mtq
                    \end{align*}
                and then a contradiction. The case $\widetilde
                \mtq=0$ is analogous.
            \end{enumerate}

                We have showed that, in any case, if $(x_3^n)_n$
                is bounded, dichotomy leads to a contradiction.\\
                It remain to study what would happen if
                $(x_3^n)_n$ was unbounded. Suppose that the
                dichotomic behaviour of the statement does not
                hold. Then, by the evenness of the functional and the oddness with
                respect to the third variable of the functions in $\Hco$,
                    \begin{align*}
                        \int_{B_R(\xi_n)} d \mutq_n &\to \tilde c <\frac c 2\\
                        \int_{B_R(-\xi_n)} d \mutq_n &\to \tilde c <\frac c 2\\
                        \int_{\RT\setminus\S_n} d \mutq_n &\to
                        c-2\tilde c
                    \end{align*}
                where we have assumed the following notation: $\S_n=B_{R_n}(\xi_n)\cup B_{R_n}(-\xi_n).$
                Observe that we can redefine the sequence $R_n$ in such a way
                we have $B_{R_n}(\xi_n)\cap
                B_{R_n}(-\xi_n)=\emptyset.$\\
                Now, consider a sequence of $\xi_n-$radially symmetric
                cut-off functions $\rho_n \in C^1(\{x\in\RT\mid x_3>0\})$ such that $\rho_n\equiv 1$ in
                $B_{R}(\xi_n)$, $\rho_n\equiv 0$ in $\{x\in\RT\mid x_3>0\} \setminus
                B_{R_n}(0)$, $0\le \rho_n\le 1$ and $|\n \rho_n|\le
                2/(R_n-R)$, and define $\sigma_n\in C^1(\RT)$ by
                evenness with respect to the third variable.\\
                Set $v_n=\sigma_n u_n$ and $w_n=(1-\sigma_n) u_n.$
                Of course $v_n$ and $w_n$ are in $\Hco$ and we can repeat
                exactly the same arguments as in the {\it $x_3^n$
                bounded case} to get a contradiction.

                The proposition is so completely proved.
            \end{proof}

    \section{Proof of the main Theorem}\label{sec:proof}

        From now on, all the sequences considered have their $\limsup$ in the
        norm of $\H$ less than $\bar T,$ being $\bar T$ the same as in Lemma
        \ref{le:cTq}. Therefore there is no difference between
        $J_q$ and $J^T_q$ evaluated on them.
            \begin{theorem}\label{th:inf}
                Let $q$ be as in Theorem \ref{th:conc}, then the infimum $m_q$ is achieved.
            \end{theorem}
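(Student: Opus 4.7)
The plan is to begin with a minimizing sequence $(u_n)_n \subset \M$ for $J_q|_\M$ (with $q$ as in Theorem \ref{th:conc}) and to extract a minimizer via the concentration--compactness alternatives already established. By Lemma \ref{le:cTq}, for $T \ge \bar T$ the sequence is bounded in $\H$ and is also minimizing for $J^T_q|_\M$; Theorem \ref{th:nonv} excludes vanishing of the measures $\mu^{T,q}_n$, so only two scenarios remain: concentration in a ball $B_R$ centered at the origin, or the symmetric two-cluster dichotomy along the $x_3$-axis.

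In the concentration scenario, I would extract, along a subsequence, a weak limit $u_n \weakto u$ in $\H$, with $u \in \Hco$ by weak closedness of the symmetry class. Rellich--Kondrachov gives $u_n \to u$ in $L^s_{loc}$ for $2 \le s < 6$, while the concentration of $\mu^{T,q}_n$, together with the bound $G_2(s) \ge \frac{\o}{2} s^2$ from \eqref{eq:G2}, yields tightness $\iRc u_n^2 \to 0$, promoting the convergence to $L^s(\RT)$ globally in the same range. Then \eqref{eq:G1G2a} gives $\irt G_1(u_n) \to \irt G_1(u)$, while Fatou for the nonnegative $G_2$ gives $\irt G_2(u) \le \liminf \irt G_2(u_n)$; combining, $\irt G(u) \ge 1$. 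A strict inequality here would, via the rescaling $u_\sigma(\cdot) := u(\sigma\,\cdot)$ with $\sigma := (\irt G(u))^{1/3} > 1$, produce $u_\sigma \in \M$ with
\begin{equation*}
    J_q(u_\sigma) = \frac{1}{2\sigma}\irt |\n u|^2 + \frac{q}{4\sigma^5}\irt \phi_u u^2 < J_q(u) \le \liminf_n J_q(u_n) = m_q,
\end{equation*}
contradicting the definition of $m_q$. Thus $u \in \M$, and weak lower semicontinuity of the Dirichlet seminorm combined with the $L^{12/5}$-continuity of the Poisson integral give $J_q(u) \le m_q$, hence $J_q(u) = m_q$.

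In the symmetric dichotomy scenario, one translates the upper cluster to the origin by $\hat u_n(\cdot) := u_n(\cdot + \xi_n)$, which is bounded in $\H$ and cylindrical in $(x_1, x_2)$ but no longer odd in $x_3$. Its subsequential weak limit $\hat u$ must be nontrivial because the $\mu^{T,q}$-mass of $\hat u_n$ near the origin is bounded below by a positive constant (again via $G_2(s) \ge \frac{\o}{2} s^2$), forbidding local $L^2$-vanishing. A Brezis--Lieb splitting on the upper/lower-half decomposition then pins $\irt G(\hat u) = 1/2$, and together with weak lower semicontinuity and the bump separation $|x_3^n| \to +\infty$ it forces $\irt|\n\hat u|^2 = A/2$ and $\irt \phi_{\hat u}\hat u^2 = B/2$, where $A := \lim \irt|\n u_n|^2$ and $B := \lim \irt \phi_{u_n} u_n^2$. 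The reflected--shifted candidate
\begin{equation*}
    U(x) := \hat u(x_1, x_2, x_3 - d) - \hat u(x_1, x_2, -x_3 - d)
\end{equation*}
lies in $\Hco$ for every $d > 0$; as $d \to +\infty$ the Poisson cross term between its two copies vanishes and an infinitesimal rescaling onto $\M$ gives $J_q \to m_q$, so that a strict subadditivity argument in the spirit of case i) in the proof of Theorem \ref{th:conc}, valid for $q$ small, rules out this scenario and reduces matters to the compactness case analyzed above. The main obstacle is precisely this dichotomy case: justifying the Brezis--Lieb type splitting of $\irt G$ in the Berestycki--Lions setting (delicate because $G_2$ may have critical growth) and quantifying the decay of the Poisson cross term between the two reflected bumps are the key technical points on which the final subadditivity contradiction rests.
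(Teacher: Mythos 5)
Your handling of the concentration alternative is essentially the paper's own argument (weak limit in $\Hco$, tightness from the concentrated measures and \eqref{eq:G2}, convergence of $\irt G_1(u_n)$, Fatou for $G_2$ giving $\irt G(u)\ge 1$, the rescaling $u(\sigma\,\cdot)$ to exclude $\irt G(u)>1$, weak lower semicontinuity to conclude), and that part is sound. The gap is in the dichotomy alternative, and it is a gap of strategy, not only of detail. You try to \emph{rule out} the symmetric two-bump scenario by a strict subadditivity contradiction built on the reflected competitor $U_d$ with $d\to+\infty$. But in that limit the Poisson cross term vanishes and one only obtains $m_q\le\lim_d J_q(U_d)\le m_q$: the two-bumps-at-infinity configuration has limiting energy exactly $m_q$, so there is no strict inequality to contradict. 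Nor can you invoke ``case i) of the proof of Theorem \ref{th:conc}'': that argument requires splitting $u_n$ into two \emph{independent} nontrivial pieces, each comparable (after rescaling) to an element of $\M\subset\Hco$, whereas in the symmetric dichotomy the two bumps are images of each other under the symmetry, a single bump is not odd in $x_3$, and the remainder carries no mass --- which is precisely why Theorem \ref{th:conc} could not exclude this case in the first place.

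The paper resolves the dichotomy case in the opposite direction: it does not exclude it but \emph{converts it into concentration}. One cuts off the two bumps with an even family of cut-off functions, obtaining $v_n=\s_n u_n\in\Hco$, and translates the upper and lower bumps toward the origin to heights $\pm 3R$; the resulting sequence $\tilde v_n$ remains in $\Hco$, still satisfies $\irt G(\tilde v_n)\to 1$ and $J_q(\tilde v_n)\to m_q$, and its measures concentrate in the fixed ball $B_{4R}$, so the concentration analysis then applies to $(\tilde v_n)_n$ instead of $(u_n)_n$. Your $U_d$ with $d$ fixed and small is morally this construction applied to the weak limit; sending $d\to\infty$ goes the wrong way. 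Two further assertions in your dichotomy paragraph are unjustified as stated: the identity $\irt|\n\hat u|^2=A/2$ would require strong convergence of the gradients (weak lower semicontinuity gives only an inequality), and you yourself leave open the Brezis--Lieb splitting of $\irt G$ and the cross-term estimates on which your contradiction would rest. As written, the dichotomy case is not closed.
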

            \begin{proof}
                Suppose that the dichotomy situation described in
                Theorem \ref{th:conc} holds. Since $x_3^n\to+\infty,$
                we can suppose that for any $n\ge 1$ we have
                $x_3^n>3R.$ Then, consider a sequence of $\xi_n-$radially symmetric
                cut-off functions $\rho_n \in C^1(\{x\in\RT\mid x_3>0\})$ such that $\rho_n\equiv 1$ in
                $B_{R}(\xi_n)$, $\rho_n\equiv 0$ in $\{x\in\RT\mid x_3>0\} \setminus
                B_{2R}(\xi_n)$, $0\le \rho_n\le 1$ and $|\n \rho_n|\le
                2/R$, and define $\sigma_n\in C^1(\RT)$ by
                evenness with respect to the third variable.\\
                Set $v_n=\s_n u_n\in\Hco$ and for any $x=(x_1,x_2,x_3)\in\R^3$ define
                    \begin{equation}
                        \tilde v_n(x)=\left\{
                            \begin{array}{ll}
                                v_n(x_1,x_2,x_3+\xi_n-3R)&\hbox{if
                                } x_3>0
                                \\
                                v_n(x_1,x_2,x_3-\xi_n+3R)&\hbox{if
                                } x_3<0.
                            \end{array}
                        \right.
                    \end{equation}
                We would
                have that, for $R'=4 R,$
                    \begin{equation}
                        \frac 12 \int_{B_{R'}}|\n \tilde v_n|^2+\int_{B_{R'}}
                        G_2(\tilde v_n)+\frac q 4 \int_{B_{R'}} \phi_{\tilde v_n} \tilde
                        v_n^2\to c
                    \end{equation}
                and it is easy to verify also that a sequence so
                defined is such that
                $$\irt G(\tilde v_n)\to 1\quad\hbox{and}\quad J_q(\tilde v_n)\to m_q.$$
                So, in any case, by Theorem \ref{th:conc} we are
                able to obtain a minimizing sequence that we label $(u_n)_n$ for the
                functional restricted to $\M,$ which concentrates
                on a ball centered at the origin and with a sufficiently
                large radius.\\
                By boundedness of the sequence, we can extract a
                subsequence weakly convergent in $H^1-$norm to a
                function $u.$\\
                As a consequence of the weak convergence, the
                Fatou lemma and the weak lower semicontinuity
                of $\|\n\cdot\|_2,$, we have
                    \begin{equation}\label{eq:liminf}
                        J_q(u)\le\liminf_n J_q(u_n)=m_q.
                    \end{equation}
                Since we also have
                    \begin{align}
                        u_n & \to u \hbox { pointwise}\label{eq:point}\\
                        u_n & \to u \hbox { in } L^q(B), \hbox{ for
                        any bounded set } B \hbox { and any } q\in
                        [1,6[,
                    \end{align}
                we deduce that $u\in\Hco\setminus\{0\}$ and $G_1(u_n(x))\to
                G_1(u(x))$ for any $x\in\RT.$\\
                Since
                    $$G_1(s)= o_n(s^2 + |s|^{p+1})\hbox{ for } s\to 0
                    \hbox{ and } s\to\infty,$$
                and by concentration we have
                    $$\int_{\RT\setminus B_R} u_n^2 + |u_n|^{p+1}\to 0,$$
                by standard compactness argument (see for
                instance the proof of Theorem A.I. in
                the Appendix in \cite{BL1}) we deduce that
                    \begin{equation*}
                        \irt G_1(u_n)\to\irt G_1(u).
                    \end{equation*}
                On the other hand, we also have that
                    \begin{equation*}
                        1+\irt G_2(u_n) = \irt G_1(u_n)\to\irt
                        G_1(u)
                    \end{equation*}
                and then, by \eqref{eq:point}
                    \begin{equation*}
                        \irt G_2(u)\le\liminf_n\irt G_2(u_n)= \irt
                        G_1(u) -1.
                    \end{equation*}
                that is $\irt G(u)\ge 1.$ We deduce that $\irt G(u)=1,$ otherwise we set
                $\bar u=u(K\,\cdot)\in\M$ with $K=\sqrt[3]{\irt G(u)}>1$
                and by \eqref{eq:liminf} we have,
                    \begin{align*}
                        m_q & \le J_q(\bar u)=\frac 1 {2\sqrt[3]
                        K}\irt |\n u|^2+\frac q
                        {4\sqrt[3]{K^5}}\irt \phi_u u^2\\
                            & < J_q(u)\le m_q
                    \end{align*}
                which is a contradiction.\\
                So $\irt G(u)=1$, and by \eqref{eq:liminf}
                $J_q(u)=m_q.$
            \end{proof}
        \begin{proofsol}
            Let $\bar u\in\M$ be such that $J_q(\bar u)=m_q$ and
            let $\l\in\R$ be the Lagrange multiplier.
            To show that $\l >0,$ we can proceed as in \cite[pg 327]{BL1}. Now
            define $\tilde u$ and $\tilde \phi$ as in
            \eqref{eq:realsol}. We prove that $(\tilde u,\tilde\phi)$
            satisfies the second equation of the system
            \eqref{eq:realsys}
                \begin{align*}
                    -\Delta \tilde \phi&=-\frac 1
                    \l\Delta{\phi_{\bar u}}(\cdot/\sqrt\l)\\
                    &=\frac 1
                    \l q {\bar u}^2(\cdot/\sqrt\l)\\
                    &=q'{\bar u}^2(\cdot/{\sqrt \l})=q'\tilde u^2.
                \end{align*}
            We prove that $(\tilde u,\tilde\phi)$
            satisfies the first equation of the system
            \eqref{eq:realsys}
                \begin{align*}
                    -\Delta \tilde u & = -\frac 1 \l \Delta \bar u
                    (\cdot/\sqrt\l)\\
                    &= -\frac 1 \l q \phi_{\bar
                    u}(\cdot/\sqrt\l)\bar u (\cdot/\sqrt\l) +
                    g(\bar u(\cdot/\sqrt\l))\\
                    &= - q'\tilde \phi\tilde u + g(\tilde u)
                \end{align*}
        \end{proofsol}
        \begin{proofmain}
            Let $(u,\phi)$ be a solution found by Theorem \ref{sol}.
            The symmetry properties derive from the natural constraint where
            we have studied the functional of the action and
            \eqref{eq:sy}.\\
            Now, observe that $u$ can be assumed
            nonnegative in the semispace $x_3>0$ and nonpositive
            in the semispace $x_3<0$.\\
            In fact, if $\bar u$ is a minimizer obtained as in
            Theorem \ref{th:inf}, we can replace it with the
            function
                \begin{equation*}
                    v=\left\{
        \begin{array}{ll}
                |\bar u| &\hbox{ on } \R^2\times ]0,+\infty[;
                \\
                -|\bar u| &\hbox{ on } \R^2\times ]-\infty,0[.
      \end{array}
      \right.
                \end{equation*}
      Obviously $v\in\Hco$ and since $J_q$ and $G$ are even, $v$
      is also a minimizer of $J_q|_\M.$\\
      Now we can apply the strong
      maximum principle in the second equation, and obtain that $\phi>0$, and in the first equation,
      obtaining that $u$ can
      vanish only on the plane $x_3=0.$ The same considerations on the sign hold for $(\tilde u,\tilde\phi),$
      and are true everywhere, since by a standard regularity argument, we
      can prove that $\tilde u$ and $\tilde\phi$ are in
      $C^{2,\a}_{loc}(\RT),$ with $\a\in (0,1).$

        \end{proofmain}


\begin{thebibliography}{99}

\bibitem{A}
A. Ambrosetti, {\it On Schr\"odinger-Poisson Systems}, Milan J.
Math, {\bf 76} (2008), 257–-274.

\bibitem{ADP}
A. Azzollini, P. d'Avenia, A. Pomponio, {\it On the
Schr\"odinger-Maxwell equations under the effect of a general
nonlinear term} preprint.

\bibitem{AP08}
A. Azzollini, A. Pomponio, {\it Ground state solutions for the
nonlinear Schr\"odinger-Maxwell equations}, J. Math. Anal. Appl.,
{\bf 345}, (2008), 90--108.


\bibitem{AP}
A. Azzollini, A. Pomponio, {\it On the Schr\"odinger equation in
$\RN$ under the effect of a general nonlinear term}, Indiana Univ.
Journal (to appear).

\bibitem{AP2}
A. Azzollini, A. Pomponio, {\it Compactness results and
applications to some ``zero mass'' elliptic problems} Nonlin.
Anal. T.M.A. {\bf 69}, (2008)  3559– 3576.

\bibitem{BF1}
V. Benci, D. Fortunato, {\it An eigenvalue problem for the
Schr\"odinger-Maxwell equations},  Topol. Methods Nonlinear Anal.,
{\bf 11} (1998), 283--293.


\bibitem{BF2}
V. Benci, D. Fortunato, {\it Existence of hylomorphic solitary
waves in Klein-Gordon and in Klein-Gordon-Maxwell equations},
Rendiconti Accademia Lincei (to appear).

\bibitem{BL1}
H. Berestycki, P.L. Lions, {\it Nonlinear scalar field equations.
I. Existence of a ground state}, Arch. Rational Mech. Anal., {\bf
82}, (1983), 313--345.

\bibitem{C1}
G.M. Coclite, {\it A multiplicity result for the linear
Schr\"odinger-Maxwell equations with negative potential}, Ann.
Polon. Math., {\bf 79}, (2002), 21--30.

\bibitem{CG}
G.M. Coclite, V. Georgiev, {\it Solitary waves for
Maxwell-Schr\"{o}dinger equations},  Electron. J. Differ. Equ.,
{\bf 94}, (2004), 1--31.


\bibitem{DA}
P. d'Avenia, {\it Non-radially symmetric solutions of nonlinear
Schr\"odinger equation coupled with Maxwell equations}, Adv.
Nonlinear Stud., {\bf 2}, (2002),  177--192.

\bibitem{DM1}
T. D'Aprile, D. Mugnai, {\it Solitary waves for nonlinear
Klein-Gordon-Maxwell and Schr\"{o}dinger-Maxwell equations}, Proc.
Roy. Soc. Edinburgh Sect. A, {\bf 134}, (2004), 893--906.

\bibitem{JL}
L. Jeanjean, S. Le Coz, {\it An existence and stability result for
standing waves of nonlinear Schr\"odinger equations},  Adv.
Differential Equations, {\bf 11}, (2006), 813--840.


\bibitem{JT}
L. Jeanjean, K. Tanaka, {\it A positive solution for a nonlinear
Schr\"odinger equation on $\RN$}, Indiana Univ. Math. J., {\bf
54}, (2005), 443--464.

\bibitem{K}
H. Kikuchi, {\it Existence and stability of standing waves for
Schrödinger-Poisson-Slater equation}, Adv. Nonlinear Stud., {\bf
7}, (2007), 403--437.

\bibitem{L1}
P.L. Lions, \textit{The concentration-compactness principle in the
calculus of variation. The locally compact case. Part I}, Ann.
Inst. Henri Poincar\'e, Anal. Non Lin\'eaire, {\bf 1}, (1984),
109--145.


\bibitem{L2}
P.L. Lions, \textit{The concentration-compactness principle in the
calculus of variation. The locally compact case. Part II}, Ann.
Inst. Henri Poincar\'e, Anal. Non Lin\'eaire, {\bf 1}, (1984),
223--283.


\bibitem{M}
D. Mugnai, {\it The Schr\"odinger-Poisson system with positive
potential}, preprint.

\bibitem{PiS1}
L. Pisani, G. Siciliano, {\it Note on a Schr¨odinger-Poisson
system in a bounded domain,} Appl. Math. Lett., {\bf 21}, (2008),
521--528.

\bibitem{PiS2}
L. Pisani, G. Siciliano, {\it Neumann condition in the
Schr\"odinger-Maxwell system}, Topol. Methods Nonlinear Anal.,
{\bf 29},  (2007), 251--264.

\bibitem{PS}
A.Pomponio, S. Secchi, {\it A note on coupled nonlinear
Schr\"odinger systems under the effect of general nonlinearities},
preprint.

\bibitem{S}
G. Siciliano, {\it Multiple positive solutions for a
Schr\"odinger-Poisson-Slater system}, preprint.

\bibitem{WZ}
Z. Wang, H.S. Zhou, {\it Positive solution for a nonlinear
stationary Schr\"odinger-Poisson system in $\RT$}, Discrete
Contin. Dyn. Syst., {\bf 18}, (2007), 809--816.



\end{thebibliography}
\end{document}